\newtheorem{theorem}{Theorem}[section]
\newtheorem{proposition}[theorem]{Proposition}
\newtheorem{lemma}[theorem]{Lemma}
\newtheorem{definition}[theorem]{Definition}
\newtheorem{remark}[theorem]{Remark}
\newcommand{\occult}[1]{}
\newcommand\eps{\epsilon}
\newcommand\NN{{\mathbb N}}
\newcommand\RR{{\mathbb R}}
\newcommand\ZZ{{\mathbb Z}}
\newcommand{\ph}{\varphi}
\newcommand{\GGG}{\mathcal{G}}
\newcommand{\DDD}{\mathcal{D}}
\newcommand{\SSS}{\mathcal{S}}
\newcommand{\PPP}{\mathcal{P}}
\newcommand{\Lspan}{\Lambda^\mathrm{span}}
\newcommand{\Lsep}{\Lambda^\mathrm{sep}}
\newcommand{\Pexp}{P_\mathrm{exp}^\perp}
\begin{document}

\title[Equilibrium states]{Equilibrium states for natural extensions of non-uniformly expanding local homeomorphisms}

\begin{abstract} 
 We examine uniqueness of equilibrium states for the natural extension of a topologically exact, non-uniformly expanding, local homeomorphism with a H\"older continuous potential function.  We do this by applying general techniques developed by Climenhaga and Thompson, and show there is a natural condition on decompositions that guarantees that a unique equilibrium state exists.  We then show how to apply these results to partially hyperbolic attractors.

\end{abstract}

\author{Todd Fisher and Krerley Oliveira}

\address{T.~Fisher, Department of Mathematics, Brigham Young University, Provo, UT 84602, USA, \emph{E-mail address:} \tt{tfisher@mathematics.byu.edu}}
\address{K.~Oliveira, Instituto de Matem\'atica, UFAL 57072-090 Macei\'o, AL, Brazil, \emph{E-mail address:} \tt{krerley@gmail.com}}

\thanks{T.F.\ is supported by Simons Foundation Grant \# 239708.  K.O.\ is partially supported by CNPq, CAPES, FAPEAL, INCTMAT and Foundation Louis D}

\subjclass[2010]{37B40, 37C40, 37D30}
\keywords{Dynamical systems; equilibrium states; attractor; measures maximizing the entropy; natural extension}

\maketitle

\section{Introduction}


For a dynamical system $f:X\to X$, where $f$ is continuous and $X$ is a compact metric space, and a continuous {\it potential function} $\ph:X\to \mathbb{R}$, the {\it topological pressure} is  $P(\ph; f)=\sup_{\mu}(h_\mu(f) + \int \ph d\mu)$ where the supremum is taken over all $f$-invariant Borel probability measures. An $f$-invariant Borel probability measure that achieves the supremum is an {\it equilibrium state}.  When $\ph= 0$ then the topological pressure is simply the entropy, and an equilibrium state is a measure of maximal entropy.  A core question in thermodynamic formalism is when there is a unique equilibrium state for $(X,f)$ and $\ph$.

Bowen
\cite{Bow75} proved that a homeomorphism of a compact metric space has a unique equilibrium state provided the system satisfies two conditions (expansivity and specification), and the potential function satisfies a dynamical bounded variation property, that is now referred to as the Bowen property.  
This result can be applied if $f:M\to M$ is a diffeomorphism, $\Lambda\subset M$ is a compact $f$-invariant hyperbolic basic set, and $\ph:M\to \mathbb{R}$ is H\"older continuous; so there is a unique equilibrium state for $f|_{\Lambda}$ and $\ph|_{\Lambda}$.

For diffeomorphisms there are a number of results outside of the uniformly hyperbolic setting.  Climenhaga and Thompson recently extended Bowen's techniques to a nonuniform version \cite{CT1, CT}, and these results have been applied for weak forms of hyperbolicity \cite{CFT_BV, CFT_Mane}. Using other techniques there are a number  of results for certain weak forms of hyperbolicity  \cite{BFT, CPZ, CrisostomoThazibi, RS}.

Sarig \cite{Sarig13} developed new symbolic tools for  $C^{1+\alpha}$ diffeomorphisms of surfaces with positive topological entropy using countable Markov chains. Buzzi, Crovisier, and Sarig \cite{BCS} then proved uniqueness of maximal entropy measures for transitive $C^\infty$ surface  diffeomorphisms with  positive topological entropy. Adapting these techniques Obata \cite{ObataStandard} recently proved for sufficiently large parameters the standard map has a unique measure of maximal entropy.

For continuous surjective maps of a compact metric space there are similarly a number of results on the existence and uniqueness of equilibrium states.  Walters \cite{Walters78} investigated properties of equilibrium states for uniformly expanding maps that are local homeomorphisms; his techniques use the transfer operator.

There are a number of results on existence and uniqueness of equilibrium states for non-uniformly expanding local homeomorphisms of a compact metric space; see for instance \cite{OV, RV, VV}.  Similar to the results in \cite{Walters78} these results use the transfer operator.  Ramos and Viana \cite{RV} show there is a condition, related to non-uniform hyperbolicity, such that if the potential is H\"older continuous and satisfies this condition, then there are finitely many ergodic equilibrium states for the potential.  Furthermore, they prove if the non-uniformly expanding map is topologically exact, then the equilibrium state is unique.

We examine the natural extension of a topologically exact local homeomorphism and obtain a condition that guarantees a unique equilibrium state.  We use the results of Climenhaga and Thompson \cite{CT} to establish the existence and uniqueness of equilibrium states for the natural extensions.  The main idea is that if there are `enough' points (in terms of pressure) with expansivity and specification, and the potential function has the Bowen property on these points, then there is a unique equilibrium state.

Our main result result (Theorem \ref{thm.unique})  shows that if there is a decomposition of orbit segments, as defined in Section \ref{s.background}, where the `bad' orbit segments for a parameter $\sigma$ (representing hyperbolicity by $\sigma$) have pressure less than the overall pressure of the system, then there is a unique equilibrium state.

This work is an extension of \cite{FO1}, where we examined unique equilibrium states for certain partially hyperbolic attractors that are topologically conjugate to natural extensions of a non-uniformly expanding local diffeomorphism.  Theorem \ref{t.generalAttractor} is an extension of Theorem \ref{thm.unique} for smooth systems, and the results in \cite{FO1} satisfy the hypotheses of \ref{t.generalAttractor} and give applications where this theorem can be applied. 

 \section{Background}\label{s.background}

In this section we review the results of Climenhaga and Thompson in \cite{CT} that we will use to establish the uniqueness of equilibrium states.

\subsection{Pressure}\label{ss.pressure}

Let $f\colon X\to X$ be a continuous map on a compact metric space.  We identify $X\times \mathbb{N}$ with the space of finite orbit segments by identifying $(x,n)$ with $(x,f(x),\dots,f^{n-1}(x))$.

Given a continuous potential function $\varphi\colon X\to \mathbb{R}$, write
$$
S_n\varphi(x) = S_n^f \ph(x) = \sum_{k=0}^{n-1} \varphi(f^k(x)).
$$
The \emph{$n^\mathrm{th}$ Bowen metric} associated to $f$ is defined by
\[
d_n(x,y) = \max \{ d(f^k(x),f^k(y)) \,:\, 0\leq k < n\}.
\]
Given $x\in X$, $\eps>0$, and $n\in \NN$, the \emph{Bowen ball of order $n$ with center $x$ and radius $\eps$} is
$
\Gamma^n_\eps(x) = \{y\in M \, :\,  d_n(x,y) < \eps\}.
$
A set $E\subset M$ is $(n,\eps)$-separated if $d_n(x,y) \geq \eps$ for all $x,y\in E$.

Given $\mathcal{D}\subset X\times \mathbb{N}$, we interpret $\DDD$ as a \emph{collection of orbit segments}. Write $\mathcal{D}_n = \{x\in X \, :\,  (x,n)\in \mathcal{D}\}$ for the set of initial points of orbits of length $n$ in $\mathcal{D}$.  Then we consider the partition sum
$$
\Lambda^{\mathrm{sep}}_n(\DDD,\ph,\epsilon; f) =\sup
\Big\{ \sum_{x\in E} e^{S_n\ph(x)} \, :\,  E\subset \mathcal{D}_n \text{ is $(n,\epsilon)$-separated} \Big\}.
$$
The \emph{pressure of $\ph$ on $\DDD$ at scale $\eps$} is 
$$
P(\mathcal{D},\ph,\epsilon; f) = \varlimsup_{n\to\infty} \frac 1n \log \Lambda^{\mathrm{sep}}_n(\mathcal{D},\ph,\epsilon),
$$
and the \emph{pressure of $\varphi$ on $\mathcal{D}$} is
$$
P(\mathcal{D},\ph; f) = \lim_{\epsilon\to 0}P(\mathcal{D},\ph,\epsilon).
$$

Given $Z \subset X$, let $P(Z, \varphi, \epsilon; f) := P(Z \times \NN, \varphi, \epsilon; f)$; observe that $P(Z, \varphi; f)$ denotes the usual upper capacity pressure \cite{Pesin}. We often write $P(\ph)$ in place of $P(X, \ph;f)$ for the pressure of the whole space. When $\ph=0$, our definition gives the \emph{entropy of $\mathcal{D}$}:
\begin{equation}\label{eqn:h}
\begin{aligned}
h(\mathcal{D}, \epsilon; f)= h(\mathcal{D}, \epsilon) &:= P(\mathcal{D}, 0, \epsilon) \mbox{ and } h(\mathcal{D})= \lim_{\epsilon\rightarrow 0} h(\mathcal{D}, \epsilon).
\end{aligned}
\end{equation}

Write $\mathcal{M}(f)$ for the set of $f$-invariant Borel probability measures and $\mathcal{M}_e(f)$ for the set of ergodic measures in $\mathcal{M}(f)$.
The variational principle for pressure \cite[Theorem 10.4.1]{VO} states that 
\[
P(\varphi)=\sup_{\mu\in \mathcal{M}(f)}\left\{ h_{\mu}(f) +\int \varphi \,d\mu\right\}
=\sup_{\mu\in \mathcal{M}_e(f)}\left\{ h_{\mu}(f) +\int \varphi \,d\mu\right\}.
\]

 \subsection{Obstructions to expansivity, specification, and regularity}

We recall definitions and results from \cite{CT}, which show that non-uniform versions of Bowen result in \cite{Bow75} suffice to prove uniqueness.

Given a map $f\colon X\to X$, the \emph{infinite Bowen ball around $x\in X$ of size $\eps>0$} is the set
\[
\Gamma^+_\eps(x) := \{y\in X \, :\,  d(f^k(x),f^k(y)) < \eps \text{ for all } n\geq 0 \}.
\]
If there exists $\eps>0$ for which $\Gamma^+_\eps(x)= \{x\}$ for all $x\in X$, we say $(X, f)$ is \emph{expanding}. 
For $f\colon X\to X$ a homeomorphism, the \emph{bi-infinite Bowen ball around $x\in X$ of size $\eps>0$} is the set
\[
\Gamma_\eps(x) := \{y\in X \, :\,  d(f^k(x),f^k(y)) < \eps \text{ for all } n\in \ZZ \}.
\]
If there exists $\eps>0$ for which $\Gamma_\eps(x)= \{x\}$ for all $x\in X$, we say $(X, f)$ is \emph{expansive}. 

\begin{definition} \label{almostexpansive}
For $f\colon X\rightarrow X$ a homeomorphism the set of non-expansive points at scale $\epsilon$ is  $\mathrm{NE}(\epsilon):=\{ x\in X \, :\,  \Gamma_\eps(x)\neq \{x\}\}$.  An $f$-invariant measure $\mu$ is  almost expansive at scale $\epsilon$ if $\mu(\mathrm{NE}(\epsilon))=0$.  Given a potential $\varphi$, the pressure of obstructions to expansivity at scale $\epsilon$ is
\begin{align*}
\Pexp(\varphi, \epsilon) &=\sup_{\mu\in \mathcal{M}_e(f)}\left\{ h_{\mu}(f) + \int \varphi\, d\mu\, :\, \mu(\mathrm{NE}(\epsilon))>0\right\} \\
&=\sup_{\mu\in \mathcal{M}_e(f)}\left\{ h_{\mu}(f) + \int \varphi\, d\mu\, :\, \mu(\mathrm{NE}(\epsilon))=1\right\}.
\end{align*}
This is monotonic in $\eps$, so we can define a scale-free quantity by
\[
\Pexp(\varphi) = \lim_{\epsilon \to 0} \Pexp(\varphi, \epsilon).
\]
\end{definition}

\begin{definition} 
A collection of orbit segments $\mathcal{G}\subset X\times \mathbb{N}$ has the (W)-\emph{specification at scale $\epsilon$} if there exists $\tau\in\mathbb{N}$ 
and $k_0$ 
such that for every $\{(x_j, n_j)\, :\, 1\leq j\leq k\}\subset \mathcal{G}$ with $n_j>k_0$, there is a sequence of `gluing times'  $\tau_1, \dots, \tau_{k-1}\in \mathbb{N}$ with $\tau_i\leq \tau$ for all $1\leq i\leq k-1$ and a point $x$ such that for $s_j=\sum_{i=1}^j n_j + \sum_{i=1}^{j-1} \tau_i$ and $s_0=\tau_0=0$ we have
$$
d_{n_j}(f^{s_{j-1} +\tau_{j-1}}(x), x_j)<\epsilon \textrm{ for every }1\leq j\leq k.
$$

\end{definition}

The above definition says that there is some point $x$ whose trajectory stays $\epsilon$ close to each of the $(x_i,n_i)$ in turn, taking a transition time of $\tau_i\leq \tau$ between each one.   We note that specification is related to, but distinct from, shadowing. 

\begin{definition} \label{Bowen}
Given $\mathcal{G}\subset X\times \mathbb{N}$, a potential $\varphi$  has the \emph{Bowen property on $\GGG$ at scale $\epsilon$} if
\[
V(\GGG,\ph,\epsilon) := \sup \{ |S_n\varphi (x) - S_n\varphi(y)| : (x,n) \in \GGG, y \in B_n(x, \epsilon) \} <\infty.
\]
We say $\varphi$ has the \emph{Bowen property on $\GGG$} if there exists $\epsilon>0$ so that $\varphi$ has the Bowen property on $\GGG$ at scale $\epsilon$.
\end{definition}

Note that if $\GGG$ has the Bowen property at scale $\eps$, then it has it for all smaller scales. 


\subsection{General results on uniqueness of equilibrium states}

Our main tool for existence and uniqueness of equilibrium states is \cite[Theorem 5.5]{CT}.  Before stating this result we need the next definition.

\begin{definition}
A \emph{decomposition} for $(X,f)$ consists of three collections $\mathcal{P}, \mathcal{G}, \mathcal{S}\subset X\times (\NN\cup\{0\})$ and three functions $p,g,s\colon X\times \mathbb{N}\to \NN\cup\{0\}$ such that for every $(x,n)\in X\times \NN$, the values $p=p(x,n)$, $g=g(x,n)$, and $s=s(x,n)$ satisfy $n = p+g+s$, and 
\begin{equation}\label{eqn:decomposition}
(x,p)\in \mathcal{P}, \quad (f^p(x), g)\in\mathcal{G}, \quad (f^{p+g}(x), s)\in \mathcal{S}.
\end{equation}
\end{definition}

Note that the symbol $(x,0)$ denotes the empty set, and the functions $p, g, s$ are permitted to take the value zero. 
 
\begin{theorem}[Theorem 5.5 of \cite{CT}]\label{t.generalM}
Let $X$ be a compact metric space and $f\colon X\to X$ a homeomorphism. 
Let $\ph \colon X\to\RR$ be a continuous potential function.
Suppose that $\Pexp(\ph) < P(\ph)$, and that $(X,f)$ admits a decomposition $(\PPP, \GGG, \SSS)$ with the following properties:
\begin{enumerate}
\item $\GGG$ has (W)-specification at any scale;
\item  $\ph$ has the Bowen property on $\GGG$;
\item $P(\PPP \cup \SSS,\ph) < P(\ph)$.
\end{enumerate}
Then there is a unique equilibrium state for $\ph$.
\end{theorem}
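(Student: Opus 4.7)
The plan is to follow the Climenhaga--Thompson strategy: use the decomposition together with the pressure gap for $\PPP \cup \SSS$ to transfer mass onto the good core $\GGG$, then exploit (W)-specification and the Bowen property on $\GGG$ to establish a Gibbs-type inequality for any equilibrium state, and finally combine this with the expansivity gap $\Pexp(\ph) < P(\ph)$ to force uniqueness.

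First I would show that all the pressure sits on $\GGG$, in the sense that $P(\GGG, \ph) = P(\ph)$. Given an $(n,\eps)$-separated subset $E \subset X$, I would partition $E$ by the triple $(p,g,s)$ arising from the decomposition. For fixed $(p,g,s)$ with $p + g + s = n$, the restriction map $x \mapsto f^p(x)$ sends this piece of $E$ into a $(g,\eps)$-spanning family of $\GGG_g$, up to a bounded multiplicity coming from the fact that the prefix and suffix come from $\PPP_p$ and $\SSS_s$. A routine counting argument, together with $\sum_{p+s \le n}$ being only polynomial in $n$, turns this into the bound
\[
\Lspan_n(X,\ph,\eps) \;\le\; C n^2 \max_{p+g+s=n} \Lspan_p(\PPP,\ph,\eps)\,\Lspan_g(\GGG,\ph,\eps)\,\Lspan_s(\SSS,\ph,\eps),
\]
and the hypothesis $P(\PPP \cup \SSS, \ph) < P(\ph)$ then forces the $\GGG$ factor to supply the full exponential growth.

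Next I would build the Gibbs-type estimate. The Bowen property on $\GGG$ says that $S_n\ph$ is essentially constant on Bowen balls centered at points of $\GGG_n$, and (W)-specification at scale $\eps$ allows concatenation of any finite family of good orbit segments with bounded gap time $\tau$. Fixing an ergodic equilibrium state $\mu$ with $\mu(\mathrm{NE}(\eps)) = 0$ (which is legitimate thanks to $\Pexp(\ph) < P(\ph)$), a standard specification argument gluing together many good segments produces, for each $(x,n) \in \GGG$, a lower bound of the form
\[
\mu(B_n(x,\eps)) \;\ge\; C\, e^{-nP(\ph) + S_n\ph(x)},
\]
with $C$ independent of $x$ and $n$, at least along a subsequence of $n$ of positive lower density in the orbit of $\mu$-a.e.\ point. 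Combined with the pressure estimate from the previous step, this shows that $\mu$ is unique up to its restriction to the almost-expansive part.

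Finally I would obtain uniqueness. Suppose $\mu_1$ and $\mu_2$ are two ergodic equilibrium states; both are almost expansive at some common scale $\eps > 0$, and both satisfy the lower Gibbs bound on sufficiently many Bowen balls centered in $\GGG$. A standard Brin--Katok or density-point argument (using that the set of expansive points has full measure and that the Gibbs bound is uniform in $x \in \GGG$) then forces $\mu_1$ and $\mu_2$ to be mutually absolutely continuous with bounded Radon--Nikodym derivative; ergodicity collapses this to $\mu_1 = \mu_2$. I expect the main obstacle to be step two: verifying that the pressure gap for $\PPP \cup \SSS$ is strong enough to guarantee that a positive-density set of return times of a $\mu$-generic orbit lands in $\GGG$, which is exactly where the counting from step one has to be converted into a dynamical statement via the ergodic theorem.
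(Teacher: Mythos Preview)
The paper does not prove this theorem at all: Theorem~\ref{t.generalM} is quoted verbatim from Climenhaga--Thompson \cite{CT} (their Theorem~5.5) and used as a black box. The paper's own contribution is to \emph{apply} this result by constructing a decomposition $(\hat\PPP_\sigma,\hat\GGG_\sigma,\hat\SSS_\sigma)$ on the natural extension and verifying hypotheses (1)--(3) and the expansivity gap. So there is no proof in the paper to compare your proposal against.

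That said, your sketch is a reasonable high-level summary of the Climenhaga--Thompson strategy, though some steps are oversimplified relative to the actual argument in \cite{CT}. In particular, the uniqueness step does not proceed by showing two ergodic equilibrium states are mutually absolutely continuous with bounded Radon--Nikodym derivative; rather, \cite{CT} builds a specific equilibrium state $\mu$ via a product-of-measures construction (using specification on $\GGG$) and then shows that any other equilibrium state $\nu$ must coincide with it by an entropy-and-pressure argument using adapted partitions and the lower Gibbs bound. Your step one (pressure concentrates on $\GGG$) and step two (lower Gibbs bound on $\GGG$) are morally correct, but the passage from ``Gibbs on $\GGG$'' to ``uniqueness'' in \cite{CT} is more delicate than a density-point argument, precisely because the Gibbs bound only holds on $\GGG$ and one must control how much time generic orbits spend outside $\GGG$---which is the obstacle you correctly flag at the end.
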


\section{Decompositions for nonuniformly expanding local homeomorphisms}\label{s.decompforg}
 
 We first describe the class of maps we want to investigate.  
 Let $(X, d_X)$ be a compact metric space and $g:X\to X$ a local homeomorphism.  Suppose that every inverse branch $g^{-1}$ is locally Lipschitz continuous; so there exists a bounded function $\sigma:X\to \mathbb{R}^+$ such that for each $x\in X$ there exists a neighborhood $U_x$ of $x$ such that $g_x:=g|_{U_x}:U_x \to g(U_x)$ is inveritble and for all $y, z\in g(U_x)$ we have
$$
d_X(g_x^{-1}(y), g_x^{-1}(z))\leq \sigma(x) d_X(y,z).
$$
We assume there exists some $\epsilon_0>0$ such that for all $x\in X$ we have $B_{\epsilon_0}(x)\subset U_x$ and $B_{\epsilon_0}(g (x))\subset g(U_x)$.
We also assume that $g$ is topologically exact, meaning for each open set $U\subset X$ there exists some $N$ such that $g^N(U)=X$.

 Although we will need to look at a decomposition on the natural extension we first describe a decomposition for the local homeomorphism $g$ and prove some properties for this decomposition and this map.  We cannot apply Theorem \ref{t.generalM} for the local homeomorphism since the map needs to be a homeomorphism to apply the theorem.  However, the estimates we obtain for the local homeomorphism will be used for the natural extension in the next section.
 

 Let $\sigma\in (0,1)$, $n\in\mathbb{N}$, and $j\in \{0,..., n-1\}$ be fixed.  Define
\begin{equation}\label{NUE}
\Sigma_\sigma^{j,n} =\{x\in X\,:\, \frac{1}{n-j} \sum\limits_{i=j}^{n-1} \log \sigma(g^ix) < \log\sigma\}.
\end{equation}

The next lemma now follows from the expansion estimates for points in $\Sigma_\sigma^{0,n}$.

\begin{lemma}\label{lem.expansion}
If there exists a sequence $n_k\to \infty$ such that $x\in \Sigma_\sigma^{0, n_k}$ for each $k$, then $\Gamma^+_\epsilon(x)=\{x\}$ for $\epsilon\leq \epsilon_0$.
\end{lemma}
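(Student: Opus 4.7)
The plan is to show contraction along the backward iterates by unwrapping the local Lipschitz constants $\sigma(g^i x)$ and then using the averaged estimate in the definition of $\Sigma_\sigma^{0,n_k}$ to get geometric contraction along the subsequence $n_k$.

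First, I would fix $y\in\Gamma^+_\epsilon(x)$ with $\epsilon\leq\epsilon_0$, so that $d_X(g^k x, g^k y)<\epsilon_0$ for every $k\geq 0$. The key observation is that this double containment lets us invert $g$ along the orbit: for each $k$, the point $g^k y$ lies in $B_{\epsilon_0}(g^k x)\subset U_{g^k x}$, and the point $g^{k+1} y$ lies in $B_{\epsilon_0}(g^{k+1} x)\subset g(U_{g^k x})$. Thus both $g^{k+1}x$ and $g^{k+1}y$ are in the domain of the inverse branch $g_{g^k x}^{-1}$, which sends them back to $g^k x$ and $g^k y$ respectively. Applying the local Lipschitz hypothesis to this inverse branch gives the one-step contraction
\[
d_X(g^k x, g^k y) \leq \sigma(g^k x)\, d_X(g^{k+1} x, g^{k+1} y).
\]

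Iterating this bound for $k=0,1,\dots,n-1$ and using $d_X(g^n x, g^n y)<\epsilon$, I would obtain
\[
d_X(x,y) \leq \epsilon \prod_{i=0}^{n-1} \sigma(g^i x) = \epsilon \exp\!\Big(\sum_{i=0}^{n-1} \log \sigma(g^i x)\Big).
\]
Now I specialize to $n=n_k$. By hypothesis $x\in\Sigma_\sigma^{0,n_k}$, so the sum in the exponent is strictly less than $n_k \log\sigma$, giving $d_X(x,y) < \epsilon\, \sigma^{n_k}$. Since $\sigma<1$ and $n_k\to\infty$, letting $k\to\infty$ forces $d_X(x,y)=0$, hence $y=x$ and $\Gamma^+_\epsilon(x)=\{x\}$.

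The only real subtlety is the very first step, verifying that the local inverse branch can actually be applied at every iterate; this is precisely why the hypotheses include both $B_{\epsilon_0}(x)\subset U_x$ and $B_{\epsilon_0}(g(x))\subset g(U_x)$, and it is why we need $\epsilon\leq\epsilon_0$. Once that is in place, the remainder is a direct telescoping of the Lipschitz estimates combined with the averaged contraction condition defining $\Sigma_\sigma^{0,n_k}$, so I do not anticipate any further obstacle.
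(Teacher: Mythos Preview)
Your proof is correct and follows essentially the same approach as the paper: both argue that $y\in\Gamma^+_\epsilon(x)$ and $x\in\Sigma_\sigma^{0,n_k}$ together force $d_X(x,y)\leq\sigma^{n_k}\epsilon$, then let $n_k\to\infty$. The paper's version is terse and simply asserts this inequality, whereas you have spelled out the iteration of the local Lipschitz bound for the inverse branches and the role of the hypotheses $B_{\epsilon_0}(x)\subset U_x$ and $B_{\epsilon_0}(g(x))\subset g(U_x)$; there is no substantive difference.
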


\begin{proof}  Let $\epsilon\leq \epsilon_0$, $y\in \Gamma^+_\epsilon(x)$, and $n_k\in \mathbb{N}$ such that $x\in \Sigma_\sigma^{0, n_k}$.  This implies that $d(x,y)\leq \sigma^{n_k}\epsilon_0$.  Now as there is a sequence of $n_k\to \infty$ such that $x\in \Sigma_\sigma^{0, n_k}$ we see that $d(x,y)=0$ and $\Gamma^+_\epsilon(x)=\{x\}$.\
\end{proof}

We now define the decomposition we want.

$$
\begin{array}{llll}
\GGG_\sigma=\{ (x,n)\in X\times \mathbb{N}\, :\, x\in \Sigma_\sigma^{j,n} \, \forall \, 0\leq j\leq n-1\},\\
\SSS_\sigma=\{ (x,n)\in X\times \mathbb{N}\, :\, x\notin \Sigma_\sigma^{0,n}\}.
\end{array}$$

The collection $\GGG_\sigma$ is chosen so that there is uniform contraction by $\sigma$ along the inverse branch from $g^nx$ to $x$.  
The definition of  the collection of  orbits $\GGG_\sigma$ is inspired by the analogous notion of hyperbolic times, as in \cite{Alves}. 

\begin{remark}\label{r.concatenation}
A simple argument shows the following concatenation property of $\GGG_\sigma$: if $(x,n)$ and $(f^n(x),m)$ are in $\GGG_\sigma$, then $(x,n+m)\in \GGG_\sigma$.  
\end{remark}

We now let $(x,n)$ be an orbit segment and define $s$ to be the smallest integer where $0\leq s\leq n$ such that $(g^s x, n-s)\in \SSS_\sigma$.  Then one can easily check that $(x, n-s)\in \GGG_\sigma$ and this defines a decomposition on the orbit segments (where $\PPP_\sigma$ and $p(x,n)$ are both trivial). 

\subsection{Specification}\label{ss.spec}
 
 We now show that $\GGG_\sigma$ has specification for $g$.  The statement and proof below are modifications of those given by Proposition 3.2 in \cite{FO1}.

\begin{proposition}\label{specforg} Given $\epsilon\leq \epsilon_0$  there exist $\tau=\tau(\epsilon)$ such that if $\{(x_j,n_j)\}_{j=0}^m \subset \GGG_\sigma$, then there exists $\tau_i \leq \tau$ for $i=1,\dots,l-1$ and  some $z \in N$ such that

$$
d(g^{m}(x_j),g^{m+r_{j-1}}(z)) \leq \epsilon,
$$ for $0\leq m \leq n_j+k-1$, where $r_{0}=0$ and $r_j = \sum_{i=1}^{j} (n_i+\tau_i)$ for each $j \geq 1$.
\end{proposition}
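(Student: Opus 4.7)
The plan is to combine a uniform form of topological exactness with the contracting inverse-branch property attached to orbits in $\GGG_\sigma$, and then build the shadowing point $z$ by a backward induction over the orbit segments.

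The first step is to upgrade topological exactness to a uniform statement: there exists $\tau=\tau(\epsilon)\in\NN$ such that $g^\tau(B_{\epsilon/2}(y)) = X$ for every $y\in X$. By topological exactness, for each $y$ there is some $N_y$ with $g^{N_y}(B_{\epsilon/4}(y))=X$; covering $X$ by finitely many balls of radius $\epsilon/4$ using compactness and letting $\tau$ be the maximum of the corresponding $N_{y_i}$, together with surjectivity of $g$ (which follows from topological exactness applied to $U=X$), gives the uniform statement.

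The second ingredient is the ``hyperbolic times'' inverse branch for $(x,n)\in\GGG_\sigma$. Membership in $\GGG_\sigma$ amounts to $\prod_{i=j}^{n-1}\sigma(g^i x)\le\sigma^{n-j}$ for every $0\le j\le n-1$. Composing the local inverse branches of $g$ near each $g^{j+1} x$ yields a well-defined inverse branch $\psi\colon B_{\epsilon_0}(g^n x)\to X$ of $g^n$ with $\psi(g^n x)=x$ and
\[
d(g^j \psi(y), g^j x) \;\le\; \sigma^{n-j}\, d(y, g^n x) \;\le\; \sigma^{n-j}\epsilon_0
\]
for all $0\le j\le n$. The key observation is that the right-hand side is bounded by $\epsilon_0$, so the intermediate preimages automatically stay inside the domains of the subsequent inverse branches and the composition is legitimate.

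Finally, I would construct $z$ by backward induction on the segments $\{(x_j,n_j)\}$. Set $z_m:=x_m$, which trivially shadows the last segment. Given $z_{j+1}$, apply uniform topological exactness to the ball $B_{\epsilon/2}(g^{n_j}(x_j))$ to pick $w_j\in B_{\epsilon/2}(g^{n_j}(x_j))$ and $\tau_j\le\tau$ with $g^{\tau_j}(w_j)=z_{j+1}$. Since $\epsilon/2<\epsilon_0$, the inverse branch $\psi_j$ associated to $(x_j,n_j)$ is defined at $w_j$, and $z_j:=\psi_j(w_j)$ satisfies $g^{n_j+\tau_j}(z_j)=z_{j+1}$ together with $d(g^i z_j, g^i x_j)\le\sigma^{n_j-i}(\epsilon/2)<\epsilon$ for $0\le i< n_j$. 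Taking $z$ to be the earliest point in the recursion, an easy induction gives $g^{r_{j-1}}(z)=z_j$ and hence the claimed shadowing. The main technical obstacle is the uniform topological exactness step; once that is in place, the hyperbolic-times inverse branches and the backward induction are essentially routine.
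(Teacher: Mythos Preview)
Your proposal is correct and follows essentially the same strategy as the paper: first extract a uniform $\tau$ from topological exactness plus compactness, then run a backward induction over the list of segments using the contracting inverse branches supplied by membership in $\GGG_\sigma$. The only cosmetic difference is that the paper phrases the induction in terms of nested sets $X_{m-1}\supset X_{m-2}\supset\cdots$ (pulled back through inverse branches and exactness) and picks $z$ from the final set, whereas you track a single point $z_j$ at each stage; the content is identical.
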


\begin{proof} 

We first notice that if $\epsilon\leq \epsilon_0$, then there exists some $\tau=\tau(\epsilon)\in\mathbb{N}$ such that for all $y$ we have $g^j(B_\epsilon(y))=X$ for some $j\leq \tau$.  This follows from compactness of $X$ and the fact $g$ is topologically exact.

Now for $\{(x_j, n_j)\}_{j=0}^m\subset \GGG_\sigma$ we know there exists a set of points $X_{m-1}\subset B_\epsilon(g^{n_{m-1}}(x_{m-1}))$ and $\tau_m\leq \tau$ such that $g^{\tau_m}(X_{m-1})$ is the image of $B_\epsilon(g^{n_m}(x_m))$ by the inverse branch of $g^{-n_m}_{g^{n_m}(x_m)}$.

Similarly, there is a nonempty set $X_{m-2}\subset B_\epsilon(g^{n_{m-2}}(x_{m-2}))$ and $\tau_{m-1}\leq \tau$ such that $g^{\tau_{m-1}}(X_{m-2})$ is the image of $X_{m-1}$ by the inverse branch $g^{-n_{m-1}}_{g^{n_{m-1}}(x_{m-1})}$.

Continuing inductively we see that $X_0$ is nonempty and pick $z\subset X_0$.  By the uniform contraction along the inverse branches in $\GGG_\sigma$ we see that $z$ satisfies the requirements and $\GGG_\sigma$ has specification at scale $\epsilon$ for $g$.

\end{proof}

%
%
%
%

\subsection{Pressure of obstructions to expansivity}\label{ss.obstructions}

From Lemma \ref{lem.expansion} we know that a point $x\in X$ satisfies $\Gamma^+_\epsilon(x)\neq \{x\}$  for $\epsilon\leq \epsilon_0$
if there exists some $K(x)\in \mathbb{N}$ such that $\frac{1}{n}\sum_{i=0}^{n-1}\log \sigma(g^i x)\geq \log \sigma,$ for all  $n\geq K(x)$.

Let 
$$
A = \{ x\in X\, :\, \exists K(x), \frac{1}{n}\sum_{i=0}^{n-1}\log \sigma(g^i x)\geq \log \sigma, \forall n\geq K(x)\}.
$$
So $A$ contains the nonexpansive points of scale $\epsilon$, but may also contain some expansive points.  The next result follows from a modification of the proof of Lemma 3.5 in \cite{CFT_BV}, and will be used to show an upper bound for the pressure of obstructions to expansivity for the natural extension.

\begin{lemma}\label{lem.expansiveobstruction}
If $\varphi:X\to \mathbb{R}$ is continuous and $\mu\in \mathcal{M}_e(g)$ with $\mu(A)>0$, then $h_\mu(g) + \int \varphi d\mu\leq P(\SSS_\sigma, \varphi)$.
\end{lemma}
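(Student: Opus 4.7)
The plan is to identify $A$ with the set of points whose orbit segments eventually lie in $\SSS_\sigma$, and then apply a Katok/Brin--Katok style argument to bound $h_\mu(g) + \int \varphi\,d\mu$ by the partition-sum pressure on $\SSS_\sigma$.

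Unwinding the definitions, $x\in A$ is equivalent to $\frac{1}{n}\sum_{i=0}^{n-1}\log\sigma(g^i x) \geq \log\sigma$ for all $n \geq K(x)$, which is exactly the condition $(x,n) \in \SSS_\sigma$. Setting $A_N := \{x : (x,n) \in \SSS_\sigma\ \forall\, n \geq N\}$, we get $A = \bigcup_N A_N$ as an increasing union, so since $\mu(A)>0$ I would fix $N_0$ with $\mu(A_{N_0}) > 0$. This reduces the lemma to showing that a positive-measure set $A_{N_0}$ all of whose orbit segments of length $\geq N_0$ lie in $\SSS_\sigma$ forces $h_\mu(g)+\int\varphi\,d\mu\leq P(\SSS_\sigma,\varphi)$.

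For this reduction I would invoke the Brin--Katok local entropy formula together with Birkhoff's theorem: for $\mu$-a.e.\ $x$, $-\frac{1}{n}\log\mu(B_n(x,\eps)) \to h_\mu(g)$ at small scales, and $\frac{1}{n}S_n\varphi(x) \to \int\varphi\,d\mu$. Given $\delta > 0$, Egorov's theorem produces a positive-measure subset $\tilde A \subset A_{N_0}$ on which both convergences hold uniformly from some $N_1 \geq N_0$ onward. For $n \geq N_1$ take a maximal $(n,\eps)$-separated set $F_n \subset \tilde A$; the covering of $\tilde A$ by Bowen balls $B_n(x,\eps)$, $x\in F_n$, and the uniform bound $\mu(B_n(x,\eps))\leq e^{-n(h_\mu(g)-\delta)}$ give $\#F_n \geq \mu(\tilde A)\,e^{n(h_\mu(g)-\delta)}$. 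Combined with the pointwise estimate $S_n\varphi(x) \geq n(\int\varphi\,d\mu - \delta)$ valid on $\tilde A$, this yields
$$
\sum_{x\in F_n}e^{S_n\varphi(x)} \geq \mu(\tilde A)\,e^{n(h_\mu(g)+\int\varphi\,d\mu - 2\delta)}.
$$
Because $F_n \subset \tilde A \subset A_{N_0} \subset (\SSS_\sigma)_n$, the left-hand side is admissible for $\Lambda^{\mathrm{sep}}_n(\SSS_\sigma,\varphi,\eps)$. Taking $\frac{1}{n}\log$, $\limsup_n$, and then letting $\eps \to 0$ and $\delta \to 0$ gives the desired inequality $P(\SSS_\sigma,\varphi) \geq h_\mu(g)+\int\varphi\,d\mu$.

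The main obstacle is the technical Katok-type step: one must arrange the Brin--Katok upper bound on $\mu(B_n(x,\eps))$, the Birkhoff lower bound on $S_n\varphi(x)$, and the $(n,\eps)$-separation \emph{simultaneously} inside $A_{N_0}$, so that $F_n$ really is an admissible separated subset of $(\SSS_\sigma)_n$. This is precisely the adaptation of Lemma 3.5 of \cite{CFT_BV} that the authors allude to; once that bookkeeping is in place, the remaining estimates are routine.
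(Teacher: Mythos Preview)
Your proof is correct and follows the same overall structure as the paper's: reduce to a level set $A_{N_0}$ (the paper calls it $A_k$) on which all sufficiently long orbit segments lie in $\SSS_\sigma$, and then bound $h_\mu(g)+\int\varphi\,d\mu$ by the separated partition sums $\Lsep_n(\SSS_\sigma,\varphi,\eps)$.

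The difference is in how that last bound is obtained. The paper invokes Katok's pressure formula \cite{M88} as a black box, writing
\[
h_\mu(g)+\int\varphi\,d\mu = \lim_{\delta\to 0}\limsup_{n\to\infty}\tfrac{1}{n}\log s_n(\varphi,\delta,\mu,\eta;g)
\]
and then simply chaining $s_n \leq \Lspan_n(A_k,\varphi,\delta) \leq \Lsep_n(A_k,\varphi,\delta) \leq \Lsep_n(\SSS_\sigma,\varphi,\delta)$. You instead rebuild this inequality from its ingredients --- Brin--Katok for the entropy, Birkhoff for the integral, Egorov to make both uniform on a positive-measure subset of $A_{N_0}$ --- and then produce the separated set by hand. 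Your route is more self-contained and makes the mechanism explicit, at the cost of some bookkeeping with the order of the limits in $\eps$, $\delta$, and $n$ (in particular, the Brin--Katok lower bound on $-\tfrac{1}{n}\log\mu(B_n(x,\eps))$ requires $\eps$ small depending on $\delta$, which you do accommodate by taking $\eps\to 0$ at the end). The paper's version is shorter because Katok's formula packages exactly this argument.
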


\begin{proof}  Fix $k\in\mathbb{N}$ and $A_k=\{ x\in A\, :\, K(x)\leq k\}$.  Since $\mu(\sum_k A_k)>0$ we know there exists some $k$ such that $\mu(A_k)>0$.  For $n>k$ and $x\in A_k$ we see that $(x,n)\in \SSS_\sigma$.  Then for each $\delta>0$ we have
$$
\Lsep_n(A_k, \ph,\delta; g) \leq \Lsep_n(\SSS_\sigma,\ph,\delta;g).
$$
Let $\eta\in (0, \mu(A_k))$ and let
\[
s_n(\ph, \delta, \mu,\eta ; g)=\mathrm{inf}\left \{ \sum_{x\in E} \exp\{S^g_n\ph(x)\} :  \mu \left(\bigcup_{x\in E} \overline B_n(x, \delta) \right)\geq \eta \right \},
\]
where the infimum is over all finite subsets in $X$.  The version of Katok's entropy formula for pressure \cite{M88} gives
\[
h_{\mu}(g) + \int \ph\, d \mu=\lim_{\delta\rightarrow 0}\limsup_{n\rightarrow \infty}\frac{1}{n}\log s_n(\ph, \delta, \mu, \eta; g).
\]
Finally,
$$s_n(\ph, \delta, \mu, \eta; g) \leq \Lspan_n(A_k, \ph, \delta;g) \leq \Lsep_n(A_k, \ph, \delta;g) \leq \Lsep_n(\SSS_\sigma,\ph,\delta;g)$$
and this implies that 
\[
h_{\mu}(g)+\int \ph\, d \mu \leq P(\SSS_\sigma, \ph) = \lim_{\delta \to 0} P(\SSS_\sigma, \ph,  \delta).
\]

\end{proof}

%

\section{Equilibrium states for the natural extension}

Define the space 
$$
\hat{X}= \{ \hat x=(x_0, x_1, x_2,....)\in X^\mathbb{N}\, :\, g(x_{i+1})=x_i \forall i\geq 0\}.
$$
One can define a metric on the space $\hat X$ in a number of ways.  For instance, we can define a metric on $\hat{X}$ by 
\begin{equation}\label{eq.extensao}
\hat{d}\big(\hat x, \hat y)=\sum_{n=0}^\infty a^{-n} d_X(x_n,y_n)
\end{equation}
where $\hat x = (x_n)$, $\hat y = (y_n)$, and $a>1$.  Given the metric on $\hat X$ one can then define functions that are H\"older continuous with respect to the metric, but note that this class of functions depends on the metric.  We will use the above class of metrics in our arguments.

The {\it natural extension} of $g$ is the homeomorphism $\hat g: \hat X\to \hat X$ defined by 
$$
\hat g(\hat x)= \hat g(x_0, x_1, ...)=(g(x_0), x_0, x_1,...).
$$
There is a projection map $\hat \pi:\hat X \to X$ defined by 
$$
\hat \pi(\hat x)=\hat \pi(x_0, x_1, x_2,...)=x_0
$$
that is a continuous surjective map such that $\hat \pi \circ \hat g = g \circ \hat \pi$.

For the natural extension we see that 
$$
d_X(\hat \pi(\hat x), \hat \pi(\hat y))=d_X(x_0, y_0)\leq 
\hat d(\hat x, \hat y).$$
So $\hat\pi$ is Lipschitz with constant 1.

We now define a decomposition for orbits in $\hat X$.  We say $(\hat x, n)\in \hat \GGG_\sigma$ if $(x_0,n)\in \GGG_\sigma$ and $(\hat x, n)\in \hat \SSS_\sigma$ if $(x_0, n)\in \SSS_\sigma$.  Similarly, we define $s\in \{0,..., n\}$ such that $(\hat x, s)\in \hat \GGG_\sigma$ and $(\hat g^s(\hat x), n-s)\in \hat \SSS_\sigma$ if the same is true for the decomposition of $(x_0, n)$.

We are now able to state our main theorem.

\begin{theorem}\label{thm.unique}
Let $\hat \varphi:\hat X \to \mathbb{R}$ be H\"older continuous and $\sigma\in (0,1)$. If $P(\hat \SSS_\sigma, \hat \varphi)<P(\hat \varphi; \hat g)$, then $(\hat X, \hat g)$ has a unique equilibrium state for $\hat \varphi$.
\end{theorem}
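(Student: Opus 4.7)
The plan is to apply Theorem~\ref{t.generalM} to $(\hat X, \hat g, \hat\varphi)$ with the decomposition $(\hat\PPP_\sigma, \hat\GGG_\sigma, \hat\SSS_\sigma)$ set up just before the statement, taking $\hat\PPP_\sigma$ empty and $p(\hat x, n) \equiv 0$.  Condition (3) of that theorem, $P(\hat\PPP_\sigma \cup \hat\SSS_\sigma, \hat\varphi) < P(\hat\varphi)$, is precisely our hypothesis, so three items remain to verify: (a) $\hat\varphi$ has the Bowen property on $\hat\GGG_\sigma$, (b) $\hat\GGG_\sigma$ has (W)-specification at every scale, and (c) $\Pexp(\hat\varphi) < P(\hat\varphi)$.

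For (a), I use that $\hat\varphi$ is $\hat d$-H\"older with some exponent $\alpha$ together with the uniform contraction of inverse branches along orbit segments in $\GGG_\sigma$.  If $(\hat x, n) \in \hat\GGG_\sigma$ and $\hat y \in B_n(\hat x, \epsilon)$, the property $(x_0, n) \in \GGG_\sigma$ gives $d(g^k(y_0), g^k(x_0)) \leq \sigma^{n-k} \epsilon$ for $0 \leq k \leq n$, and a direct computation with the metric \eqref{eq.extensao}, using $a > \sigma$, yields $\hat d(\hat g^k \hat x, \hat g^k \hat y) \leq C_1 \sigma^{n-k} \epsilon + C_2 a^{-k} \hat d(\hat x, \hat y)$.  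Summing the H\"older bound $|\hat\varphi(\hat g^k \hat x) - \hat\varphi(\hat g^k \hat y)| \leq C \hat d(\hat g^k \hat x, \hat g^k \hat y)^\alpha$ over $k = 0, \dots, n-1$ produces two convergent geometric series, bounding $|S_n \hat\varphi(\hat x) - S_n \hat\varphi(\hat y)|$ independently of $(\hat x, n)$.

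For (c), at any scale $\epsilon < \epsilon_0$ the local injectivity of $g$ forbids two distinct preimage towers over a common base from being bi-infinitely $\hat d$-close, so any $\hat y \in \Gamma_\epsilon(\hat x) \setminus \{\hat x\}$ must have $x_0 \neq y_0$; forward-orbit closeness of $x_0$ and $y_0$ in $X$, combined with Lemma~\ref{lem.expansion}, forces $x_0 \in A$, the set of Section~\ref{ss.obstructions}.  Hence $\mathrm{NE}_{\hat g}(\epsilon) \subset \hat\pi^{-1}(A)$.  For a $\hat g$-ergodic $\hat\mu$ with $\hat\mu(\mathrm{NE}(\epsilon)) > 0$, choose $k$ with $\hat\mu(\hat\pi^{-1}(A_k)) > 0$; every $\hat x \in \hat\pi^{-1}(A_k)$ and $n > k$ satisfy $x_0 \notin \Sigma_\sigma^{0,n}$, hence $(\hat x, n) \in \hat\SSS_\sigma$.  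Running the Katok-formula argument of Lemma~\ref{lem.expansiveobstruction} directly in $(\hat X, \hat g, \hat\varphi)$ then yields $h_{\hat\mu}(\hat g) + \int \hat\varphi \, d\hat\mu \leq P(\hat\SSS_\sigma, \hat\varphi) < P(\hat\varphi)$, so that $\Pexp(\hat\varphi) < P(\hat\varphi)$.

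For (b), which I expect to be the main obstacle, I lift Proposition~\ref{specforg} to $\hat g$.  Given $\{(\hat x_j, n_j)\}_j \subset \hat\GGG_\sigma$, project to $\{(x_{j,0}, n_j)\} \subset \GGG_\sigma$, apply Proposition~\ref{specforg} at scale $\delta \ll \epsilon$ to obtain $z_0 \in X$ and gluing times $\tau_i \leq \tau(\delta)$, and lift $z_0$ to $\hat z \in \hat X$ by inductively setting $z_n := g^{-1}_{x_{1,n}}(z_{n-1})$ for $1 \leq n \leq N$ along the tower of $\hat x_1$, then extending arbitrarily.  For the first segment, the first $N$ past coordinates of $\hat z$ are $\delta$-close to those of $\hat x_1$ by inverse-branch Lipschitz control, and the tail $\sum_{n > N} a^{-n} \diam X$ is made negligible by the geometric decay of $\hat d$.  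The subtle point is shadowing of later segments $\hat x_j$ with $j \geq 2$: the past coordinates of $\hat g^{s_{j-1}+\tau_{j-1}+k}(\hat z)$ for small $k$ are forward iterates of $z_0$ rather than the tower of $\hat x_j$, and the resulting structural mismatch must be absorbed by the $a^{-n}$ decay.  This forces choosing the metric parameter $a$ in \eqref{eq.extensao} large in terms of $\sup_x \sigma(x)$, $\tau(\delta)$ and $\diam X$; calibrating $a$ uniformly in the scale so that (W)-specification holds at every $\epsilon > 0$ is the heart of the argument.
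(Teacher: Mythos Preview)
Your treatment of (a) and (c) is correct and coincides with the paper's: the Bowen estimate in Section~\ref{Bowen} and the reduction of $\Pexp$ to $P(\hat\SSS_\sigma,\hat\varphi)$ via Lemma~\ref{lem.expansiveobstructionforextension} are exactly what you outline.

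The gap is in (b). The parameter $a$ in \eqref{eq.extensao} is \emph{fixed}: it is part of the metric $\hat d$ on $\hat X$, and the hypothesis that $\hat\varphi$ is H\"older is with respect to this fixed metric. You cannot recalibrate $a$ after the fact, and certainly not as a function of the target scale $\epsilon$. With $a$ fixed, your own computation shows that at $m=0$ of a later segment the tail contribution to $\hat d$ is $\sum_{i\ge 1}a^{-i}\diam X=\diam X/(a-1)$, a fixed positive constant; once $\epsilon$ drops below this constant your shadowing estimate fails, so no single choice of $a$ yields (W)-specification at \emph{every} scale by the route you describe. The phrase ``calibrating $a$ uniformly in the scale'' cannot be made to work.

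The paper's Proposition~\ref{specforghat} resolves this differently. Rather than touching $a$, it exploits that any two points with the same $\hat\pi$-image become $\epsilon/2$-close after a bounded number $\tau_s(\epsilon)$ of \emph{forward} $\hat g$-iterates (because the first $k$ coordinates then agree and the remaining tail has weight $a^{-k}$). This $\tau_s(\epsilon)$ is folded into the transition time, setting $\hat\tau=\max\{\tau(\epsilon),\tau_s(\epsilon)\}$; since the transition time in (W)-specification is allowed to depend on the scale, this is legitimate, and the inverse-branch construction of Proposition~\ref{specforg} is then rerun with the enlarged $\hat\tau$. In short: the degree of freedom you need lives in $\hat\tau$, not in $a$.
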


We note that in Theorem 1 and 2 of \cite{RV} they obtain similar results for the non-uniformly expanding map $g$.  Here they define the set of non-uniformly hyperbolic points with hyperbolic constant bounded by $\sigma$.  The result is that if the pressure on the complement of these points is less than the pressure of the entire system than there are finitely many ergodic equilibrium states for any H\"older continuous potential, and if there is a point with a dense pre-orbit, then the equilibrium state is unique.

Some differences between the condition in Theorem \ref{thm.unique} and those in Theorem 1 and 2 of \cite{RV} is that we look at the pressure bounded by the orbit segments in $\hat \SSS_\sigma$ for the decomposition, and not the pressure on an invariant set as in \cite{RV}.  Additionally, our results are for the natural extension and not for the system $(X,g)$.

To prove the above result we show that
\begin{itemize}
\item $\hat \GGG_\sigma$ has specification for sufficiently small scales,
\item $\hat \GGG_\sigma$ has the Bowen property, and
\item the pressure of obstructions to expansivity are bounded from above by the pressure on $\hat \SSS_\sigma$.
\end{itemize}

\subsection{Specification}
We now show that $\hat \GGG_\sigma$ has specification for sufficiently small $\epsilon>0$.

\begin{proposition}\label{specforghat} For $\epsilon\leq \epsilon_0$ the set $\hat \GGG_\sigma$ has the specification property at scale $\epsilon$ for $\hat g$. 
\end{proposition}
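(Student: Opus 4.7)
The plan is to reduce specification in the natural extension to Proposition \ref{specforg} in the base, and then lift the shadowing point through the projection $\hat\pi$.

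Given $\{(\hat x_j, n_j)\}_{j=1}^k \subset \hat \GGG_\sigma$, the projected segments $\{((x_j)_0, n_j)\}_{j=1}^k$ lie in $\GGG_\sigma$. After fixing a cutoff $L$ and an auxiliary scale $\epsilon' < \epsilon$ (both to be chosen), apply Proposition \ref{specforg} to obtain a transition time $\tau$, gluing times $\tau_1,\ldots,\tau_{k-1}\le\tau$, and a point $z_0 \in X$ with $d_{n_j}(g^{t_j}(z_0), (x_j)_0) < \epsilon'$, where $t_j = s_{j-1} + \tau_{j-1}$. Lift $z_0$ to $\hat z = (z_0, z_1, z_2, \ldots) \in \hat X$ by inductively taking $z_l$ to be the pre-image of $z_{l-1}$ lying in the Lipschitz inverse-branch neighborhood of $(x_1)_l$; this is well-defined since $\epsilon' \le \epsilon_0$, and it ensures the backward orbit of $\hat z$ tracks that of $\hat x_1$.

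The verification requires bounding $\hat d(\hat g^{t_j+m}(\hat z), \hat g^m(\hat x_j))$ for each $j$ and each $m \in [0, n_j)$. Expanding via the definition of $\hat d$ and splitting on the index $n$: terms with $n \le m$ compare two forward iterates whose distance is at most $\epsilon'$ by the base specification, contributing in total at most $\epsilon'\, a/(a-1)$; terms with $n > t_j + m$ compare backward iterates, and for $j = 1$ the construction of $z_l$ gives $d_X(z_l, (x_1)_l) \le \sigma_{\max}^l\, \epsilon'$, summing in a geometric series $\sum_l (\sigma_{\max}/a)^l$ that converges under the natural assumption that the metric parameter $a$ exceeds $\sigma_{\max} := \sup_x \sigma(x)$; for $j \ge 2$ the weight $a^{-n} \le a^{-t_j}$ absorbs the trivial $\diam(X)$ bound provided $t_j$ is large.

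The main obstacle is the intermediate transit range $m < n \le t_j + m$ for $j \ge 2$, where a forward iterate $g^{t_j + m - n}(z_0)$ falling inside a gluing gap is compared with the inverse iterate $(x_j)_{n-m}$, and these two points are a priori unrelated. To control it, I would refine the construction in the proof of Proposition \ref{specforg}: modify the set $X_{j-1}$ so that during the last $L$ iterates of the gluing, the forward orbit of $z_0$ passes within $\epsilon'$ of the pre-image chain $(x_j)_L, (x_j)_{L-1}, \ldots, (x_j)_1, (x_j)_0$. This is achievable via topological exactness, which lets us pull the target neighborhood of $(x_j)_L$ back into a ball around $g^{n_{j-1}}((x_{j-1})_0)$ within a bounded number of iterates, at the cost of enlarging the bound $\tau$. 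With this refinement the transit sum splits into $n - m \le L$ terms of size $\epsilon'$ and $n - m > L$ terms absorbed by $a^{-L}$. Choosing $L$ so that $\diam(X)\, a^{-L}/(1-a^{-1}) < \epsilon/4$ and then $\epsilon'$ so small that the $\epsilon'$-contributions are $< \epsilon/2$ completes the bound $\hat d_{n_j}(\hat g^{t_j}(\hat z), \hat x_j) < \epsilon$, as required.
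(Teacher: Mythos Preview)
Your overall strategy matches the paper's: reduce to Proposition~\ref{specforg} in the base and exploit the exponential weights $a^{-n}$ in $\hat d$ so that only finitely many backward coordinates need explicit control. The paper phrases this as ``uniform contraction on the fiber'' (the diameter of $\hat g^{k}(\hat\pi^{-1}(x_0))$ is at most $\diam(X)\cdot a^{-k}/(1-a^{-1})$) and absorbs it into an enlarged transition bound $\hat\tau=\max\{\tau,\tau_s\}$; your prefix of length $L$ for the segments $j\ge 2$ is the same mechanism spelled out orbit by orbit.

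There is, however, a genuine gap in your treatment of the first segment. The paper assumes only $a>1$, and since $g$ is merely non-uniformly expanding one may well have $\sigma_{\max}:=\sup_x\sigma(x)>a$. Without the extra hypothesis $a>\sigma_{\max}$ your branch-following lift can fail in two ways: the inductive estimate $d_X(z_l,(x_1)_l)\le\bigl(\prod_{i=1}^{l}\sigma((x_1)_i)\bigr)\epsilon'$ may exceed $\epsilon_0$ at some finite stage, so the next inverse branch is undefined and $\hat z$ does not even exist; and even when it does, the series $\sum_l a^{-l}\prod_{i\le l}\sigma((x_1)_i)$ need not converge. The repair is to handle $j=1$ exactly as you handle $j\ge 2$: pull the set $X_0$ produced in the proof of Proposition~\ref{specforg} back a further $L$ steps along the branches $(x_1)_1,\dots,(x_1)_L$ --- this stays inside an $\epsilon_0$-ball because the preceding $\GGG_\sigma$-pullback already shrank the radius by $\sigma^{n_1}$, and $\sigma_{\max}^{L}\sigma^{n_1}\le 1$ once $k_0\ge L\log\sigma_{\max}/|\log\sigma|$ in the (W)-specification --- and then set $\hat z=\hat g^{L}(\hat z')$ for an \emph{arbitrary} lift $\hat z'$ of the resulting base point. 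All coordinates of index $>L$ are then controlled by $a^{-L}$ alone, and no comparison between $a$ and $\sigma_{\max}$ is needed.
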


\begin{proof}

Given $\epsilon>0$, take $\tau=\tau(\epsilon/2)$ as in Proposition~\ref{specforg}.   By the uniform contraction on the fiber and $\mathrm{diam}(X)<\infty$ we know there exists a $\tau_s=\tau_s(\epsilon/2)$ such that for all $\hat x\in \hat X$ with $\hat \pi(\hat x)=x_0$ there exists some $j\leq \tau_s$ such that
$$
\hat g^{-k}(\hat \pi^{-1}(x_0))\subset B_{\epsilon/2}(\hat g^{-k}(\hat x))\cap \hat \pi^{-1}(\hat \pi(\hat g^{-k}\hat x))$$
for all $k\geq j$.

Let $\hat \tau=\max\{ \tau(\epsilon), \tau_s(\epsilon)\}$.  Now arguing as in Proposition \ref{specforg} we can use $\hat \tau$ and show that $\hat \GGG_\sigma$ has specification at scale $\epsilon$ for $\hat g$.

%
%
%
%
%
%
%
%
%
%
%

\end{proof}

\subsection{Bowen property}\label{Bowen} Now, we check that $\hat{\GGG}_\sigma$ has the Bowen property at any sufficiently small scale. Observe that given any $\hat x, \hat y \in \hat X$, by the H\"older continuity of $\hat \varphi$ we know there exist constants $C>0$ and $\alpha\in (0,1)$ such that 
\begin{equation}\label{eq.Bowen}
 |S_n \hat\varphi (\hat x) - S_n \hat \varphi(\hat y)|\leq \sum\limits_{i=0}^{n-1} C\hat d(\hat{g}^i(\hat x),\hat{g}^i(\hat y))^\alpha.
\end{equation}

If $(\hat x, n) \in \hat \GGG_\sigma$, writing $\hat x = (x_0,x_1,\dots)$ and $\hat y = (y_0,y_1,\dots) \in \Gamma^n_\eps(\hat x) $ we know  by the contraction properties of the inverse branches of $g^k$ at orbits in $\GGG_\sigma$ that 
$$
d(g^k(x_0),g^k(y_0))\leq \sigma^{n-k} d(g^n(x_0),g^n(y_0))\leq \epsilon \sigma^{n-k}.
$$
Thus, we have that for every $0 \leq i< n$:

$$
\begin{array}{llll}
d(\hat{g}^i(\hat x),\hat{g}^i(\hat y)) &= \sum\limits_{k=0}^{i} \frac{d(g^k(x_0),g^k(y_0))}{a^{i-k}} + \frac{d(\hat x,\hat y)}{a^i}\\
& \leq 
\sum\limits_{k=0}^{i} \frac{\sigma^{n-(i-k)} \epsilon}{a^{k}} + \frac{\epsilon}{a^i}\\
& = \epsilon \left(\sigma^{n-i} \left( \sum_{k=0}^i (\frac \sigma a)^k\right) + \frac{1}{a^i}\right)\\
&\leq C (\sigma^{n-i} + \frac{1}{a^i}).
\end{array}$$ 
for some constant $C$ depending on $\epsilon$. Using \eqref{eq.Bowen} we have
$$
\begin{array}{llll}
|S_n \hat\varphi (\hat x) - S_n \hat \varphi(\hat y)|&\leq \sum\limits_{i=0}^{n-1} C\hat d(\hat{g}^i(\hat x),\hat{g}^i(\hat y))^\alpha\\
& \leq \sum\limits_{i=0}^{n-1} C (\sigma^{n-i} + \frac{1}{a^i})^\alpha < K,
\end{array}
$$ for some constant $K$ big enough.

\subsection{Pressure of obstructions to expansivity}

As in Section \ref{s.decompforg} we know that a point $\hat x\in \hat X$ satisfies $\Gamma_\epsilon(\hat x)\neq \{\hat x\}$  for $\epsilon\leq \epsilon_0$ implies there exists some $K(x_0)\in \mathbb{N}$ such that $\frac{1}{n}\sum_{i=0}^{n-1}\log \sigma(g^i x_0)\geq \log \sigma, \forall n\geq K(x_0)$.

Let 
$$
\hat A = \{ \hat x\in \hat X\, :\, \exists K(x_0), \frac{1}{n}\sum_{i=0}^{n-1}\log \sigma(g^i x_0)\geq \log \sigma, \forall n\geq K(x_0)\}.
$$
So $\hat A$ contains the nonexpansive points of scale $\epsilon$, but may be larger.  Then as in Section \ref{s.decompforg} we can modify the proof   of Lemma \ref{lem.expansiveobstruction} to obtain the next result.

\begin{lemma}\label{lem.expansiveobstructionforextension}
If $\hat \varphi:\hat X\to \mathbb{R}$ is continuous and $\mu\in \mathcal{M}_e(\hat g)$ with $\mu(\hat A)>0$, then $h_\mu(\hat g) + \int \hat\varphi d\mu\leq P(\hat \SSS_\sigma, \hat \varphi)$.
\end{lemma}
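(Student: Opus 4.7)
The plan is to replicate, essentially verbatim in the natural extension, the argument used in Lemma \ref{lem.expansiveobstruction}; the only real difference is that we work on $\hat X$ with the homeomorphism $\hat g$ and invoke Katok's pressure formula in that setting, using the fact that membership of $(\hat x, n)$ in $\hat \SSS_\sigma$ depends only on the zeroth coordinate $x_0$.

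First, I would filter $\hat A$ by the size of the "waiting time'' $K$: set $\hat A_k = \{\hat x \in \hat A : K(x_0) \leq k\}$, so that $\hat A = \bigcup_{k \in \NN} \hat A_k$ with $\hat A_k \subset \hat A_{k+1}$. Since $\mu(\hat A) > 0$, countable additivity yields a fixed $k$ with $\mu(\hat A_k) > 0$. The key combinatorial observation is that for every $n > k$ and every $\hat x \in \hat A_k$, the inequality defining $A$ holds for $x_0 = \hat \pi(\hat x)$ at time $n$, hence $(x_0, n) \notin \Sigma_\sigma^{0,n}$, so $(x_0, n) \in \SSS_\sigma$, and consequently $(\hat x, n) \in \hat \SSS_\sigma$ by the definition of the extended decomposition in Section 4.

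Given this inclusion, for every $\delta > 0$ and every $n > k$ we have the partition-sum comparison
\[
\Lsep_n(\hat A_k, \hat \varphi, \delta; \hat g) \leq \Lsep_n(\hat \SSS_\sigma, \hat \varphi, \delta; \hat g),
\]
because any $(n, \delta)$-separated subset of $\hat A_k$ is an $(n, \delta)$-separated subset of $(\hat \SSS_\sigma)_n$. Fix $\eta \in (0, \mu(\hat A_k))$ and define $s_n(\hat \varphi, \delta, \mu, \eta; \hat g)$ exactly as in the proof of Lemma \ref{lem.expansiveobstruction}, but using the metric $\hat d$ and the homeomorphism $\hat g$. The spanning set used to realize $s_n$ can, as before, be taken inside $\hat A_k$ up to a $\delta$-perturbation, giving
\[
s_n(\hat \varphi, \delta, \mu, \eta; \hat g) \leq \Lspan_n(\hat A_k, \hat \varphi, \delta; \hat g) \leq \Lsep_n(\hat A_k, \hat \varphi, \delta; \hat g) \leq \Lsep_n(\hat \SSS_\sigma, \hat \varphi, \delta; \hat g).
\]

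Finally, I would apply the version of Katok's entropy formula for pressure from \cite{M88}, valid because $\hat g$ is a homeomorphism of the compact metric space $(\hat X, \hat d)$, $\mu$ is ergodic, and $\hat \varphi$ is continuous:
\[
h_\mu(\hat g) + \int \hat \varphi\, d\mu = \lim_{\delta \to 0} \varlimsup_{n \to \infty} \frac{1}{n} \log s_n(\hat \varphi, \delta, \mu, \eta; \hat g).
\]
Combining this with the partition-sum inequality above and letting $\delta \to 0$ gives $h_\mu(\hat g) + \int \hat \varphi\, d\mu \leq P(\hat \SSS_\sigma, \hat \varphi; \hat g)$, which is the conclusion. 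The only point that requires a little care (and is the "main obstacle,'' such as it is) is verifying that the passage from the non-extended setting is legitimate: one must observe that $\hat A_k$ is $\hat g$-measurable (it is the preimage under $\hat \pi$ of the Borel set $A_k \subset X$), and that Katok's formula applies in the extension; after that, the rest is bookkeeping identical to the proof of Lemma \ref{lem.expansiveobstruction}.
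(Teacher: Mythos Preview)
Your proposal is correct and follows exactly the approach indicated by the paper, which simply states that one modifies the proof of Lemma~\ref{lem.expansiveobstruction} to obtain this result. You have carried out that modification in detail: filtering $\hat A$ by the waiting time, using that $(\hat x,n)\in\hat\SSS_\sigma$ for $n>k$ and $\hat x\in\hat A_k$, and then invoking Katok's pressure formula on $(\hat X,\hat g)$ to conclude.
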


If $\mu(\mathrm{NE}(\epsilon))>0$ for some $\mu\in \mathcal{M}_e(\hat g)$, then $\mu(\hat A)>0$.  So we have the next result.

\begin{theorem}\label{t.pressureofexpansiveobstructionsforlieft}
For $\hat \varphi:\hat X\to \mathbb{R}$ continuous, $\Pexp(\hat \ph,\eps) \leq P(\hat \SSS_\sigma, \hat \ph)$.
\end{theorem}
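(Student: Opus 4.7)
The plan is to combine the inclusion $\mathrm{NE}(\epsilon)\subseteq \hat A$ (essentially observed in the paragraph preceding the theorem) with Lemma \ref{lem.expansiveobstructionforextension}. Concretely, I would first verify that for $\epsilon\leq \epsilon_0$ and any $\hat x\in \mathrm{NE}(\epsilon)$, we have $\hat x\in \hat A$. This is the contrapositive statement: if $\hat x\notin \hat A$, meaning there is a sequence $n_k\to\infty$ with $x_0\in\Sigma_\sigma^{0,n_k}$, then $\Gamma_\epsilon(\hat x)=\{\hat x\}$.

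For this, given $\hat y\in \Gamma_\epsilon(\hat x)$ with $\epsilon\leq \epsilon_0$, since the projection $\hat\pi$ is Lipschitz with constant $1$, we have $d_X(g^k(x_0),g^k(y_0))<\epsilon$ for all $k\geq 0$, so $y_0\in \Gamma_\epsilon^+(x_0)$. By Lemma \ref{lem.expansion}, $\Gamma_\epsilon^+(x_0)=\{x_0\}$, forcing $y_0=x_0$. To climb up the tower, I would use the following standing observation (implicit in the setup): if $z,w\in X$ satisfy $g(z)=g(w)$ and $d_X(z,w)<\epsilon_0$, then $z=w$, because $w\in B_{\epsilon_0}(z)\subset U_z$ and $g|_{U_z}$ is injective. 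Applying this inductively to $x_k,y_k$ (both preimages of $x_{k-1}=y_{k-1}$, at distance less than $\epsilon$), I conclude $x_k=y_k$ for all $k$, hence $\hat x=\hat y$. This establishes $\mathrm{NE}(\epsilon)\subseteq \hat A$ for $\epsilon\leq \epsilon_0$.

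Given this inclusion, the remainder is short. For any $\mu\in \mathcal{M}_e(\hat g)$ with $\mu(\mathrm{NE}(\epsilon))>0$, we have $\mu(\hat A)>0$, so Lemma \ref{lem.expansiveobstructionforextension} yields
\[
h_\mu(\hat g) + \int \hat\varphi\, d\mu \leq P(\hat\SSS_\sigma, \hat\varphi).
\]
Taking the supremum over all such $\mu$ gives $\Pexp(\hat\varphi,\epsilon) \leq P(\hat\SSS_\sigma,\hat\varphi)$, as desired. Since the right-hand side does not depend on $\epsilon$, this also passes to the scale-free quantity $\Pexp(\hat\varphi)$ in the limit $\epsilon\to 0$, which is what Theorem \ref{thm.unique} will ultimately invoke.

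The only nonroutine step is the lifting of expansivity from $(X,g)$ to $(\hat X,\hat g)$, and the potential obstacle there is controlling distinct preimages in the tower. The assumption that $B_{\epsilon_0}(x)\subset U_x$ makes this clean: once $y_0=x_0$, the coordinates agree all the way up. Everything else is a direct invocation of results already proved in the excerpt.
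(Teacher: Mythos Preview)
Your proposal is correct and follows the same approach as the paper: the paper's argument is just the one-line observation preceding the theorem (``If $\mu(\mathrm{NE}(\epsilon))>0$ for some $\mu\in \mathcal{M}_e(\hat g)$, then $\mu(\hat A)>0$'') together with Lemma~\ref{lem.expansiveobstructionforextension}, and you have simply supplied the details the paper leaves implicit, namely the lifting of expansivity from $(X,g)$ to $(\hat X,\hat g)$ via local injectivity on the backward coordinates.
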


This completes the proof of Theorem \ref{thm.unique} since the decomposition satisfies the necessary conditions from Theorem \ref{t.generalM}.

\section{Partially hyperbolic attractors}\label{s.partiallyhyperbolic}

In this section we investigate the situation where $(\hat X, \hat g)$ is topologically conjugate to an attractor for a diffeomorphism of a manifold.  We will assume that $(X, g)$ is conjugate to a smooth local diffeomorphism where $X$ is a compact manifold.  (This can be relaxed to the idea of branched manifold $X$ as defined by Williams \cite{Williams74}.) Then $\sigma(x)$ is related to $\|Dg^{-1}(x)\|$.  

We assume that attractor is partially hyperbolic where the stable direction is uniformly contracting, and the attractor is foliated by center-unstable manifolds.  This was the situation investigated in \cite{FO1}.

Let $M$ be a compact manifold and $f:M\to M$ a diffeomorphism onto its image such that there is a continuous surjection $\pi:M\to X$ where
$$
\pi\circ f=g\circ \pi.
$$
Given $y\in X$ we set $M_y=\pi^{-1}(y)$.  Therefore, $M=\bigcup_{y\in X}M_y$.  Note that $f(M_y)\subset M_{g(y)}$, each $M_y$ is compact, and there is a maximum diameter for the sets $M_y$.  
Assume there exists some $\lambda_s\in (0,1)$ 
such that
$$
d(f(x), f(y))\leq \lambda_s d(x,y)
$$
for all $x,y\in M_z$ and all $z\in X$, and such that $\lambda_s$ is less than any contraction for points in $g$.  So that the $M_y$ are local stable manifolds.

The set $\Lambda=\bigcap_{n=0}^\infty f^n(M)$ is an attractor and $\Lambda$ is compact and $f$-invariant.  This will be a partially hyperbolic attractor.  An invariant set  $\Lambda$ for a diffeomorphism $f\colon M\to M$ is \emph{(weakly) partially hyperbolic} if there is a $Df$-invariant splitting $TM=E^s\oplus E^c\oplus E^u$, where at least one of $E^s$ or $E^u$ is nontrivial,  and constants $N\in \NN$, $\lambda>1$ such that for every $x\in \Lambda$ and every unit vector $v^\sigma\in E^\sigma$ for $\sigma\in \{s, c, u\}$, we have
\begin{enumerate}
\item[(i)] $\lambda \|Df^N_x v^s\|<\|Df^N_x v^c\|<\lambda^{-1}\|Df^N_x v^u\|$, and
\item[(ii)] $\|Df^N_x v^s\|<\lambda^{-1}<\lambda<\|Df^N_x v^u\|$.
\end{enumerate}

A partially hyperbolic set for  $f$ admits \emph{stable and unstable foliations} $W^s$ and $W^u$, which are $f$-invariant and tangent to $E^s$ and $E^u$, respectively \cite[Theorem 4.8]{yP04}.  In our situation there exists a stable foliation and the local stable leaves are given by the $M_y$.  

We also need the following fact so that the metric on $M$ is related to the metric on $X$.  Given $x,y\in M$  there exist 
$f$-invariant 
holonomies $h_{\pi (x), \pi (y)}:M_{\pi (x)}\cap \Lambda\to M_{\pi (y)}\cap \Lambda$  and a constant $C\geq 1$ such that 
\begin{equation}\label{eqn.equivalence}
\begin{array}{cccc}\frac{1}{C}[d_X(\pi(x), \pi(y)) + d_M(h_{\pi(x), \pi(y)}(x), y)]\\
 \leq d_M(x,y)\\
\leq
C[d_X(\pi (x), \pi (y)) + d_M(h_{\pi (x), \pi (y)}(x), y)]
\end{array}
\end{equation}
where $d_M$, and $d_X$ are the metrics on $M$ and $X$ respectively.  Furthermore, we assume that the holonomies are invariant for $f$ so that 
$$f(h_{\pi (x), \pi (y)}(z))=h_{g(\pi (x)), g(\pi (y))}f(z).$$

The attractor $\Lambda$ can be described as ``solenoid-like" and is topologically conjugate to the natural extension of the system $(X, g)$.

Let $g:X\to X$ be a local homeomorphism satisfying the conditions described in Section \ref{s.decompforg} and $\Lambda$ a partially hyperbolic attractor as defined above.  
%
%
%
%
%
Define the map $h:\Lambda \rightarrow \hat X$ by $h(p)=\hat x$ where $f^{-j}(p)\in M_{x_j}$ with $x_j\in X$ for all $j\geq 0$.  Then $h$ is a topological conjugacy from $(\Lambda, f)$ to $(\hat X, \hat g)$.

%
%
%
%
%

So locally $\Lambda$ is a product of a neighborhood in $X$ by a Cantor set.  
This is similar to the standard construction of the solenoid, sometimes also called the Smale-Williams attractor \cite{Williams74}, as the inverse limit of the doubling map on the circle.

If $h^{-1}$ is H\"older continuous in the above, then 
given a function $\varphi:\Lambda\to \mathbb{R}$ and $h:\Lambda\to \hat X$ we could define $\hat \varphi (\hat x)=\varphi (h^{-1}(\hat x))$.  Then Theorem \ref{thm.unique} gives conditions to ensure a unique equilibrium state for $\hat \varphi$ and we can show this gives a unique equilibrium state for $\varphi$.  However, in general it is not clear that $h^{-1}$ is H\"older, and so we will modify the arguments from the previous sections to  partially hyperbolic attractors.

We assume the same properties for $g:X\to X$ as in Section \ref{s.decompforg} and define the same decomposition for $g:X\to X$.  As in Section \ref{s.decompforg} we use the decomposition for $g:X\to X$ to give us a decomposition for orbit segments in $\Lambda$.  For the decomposition of $\Lambda$ we let an orbit segment $(y,n)$ for $(\Lambda, f)$ be in $\bar \GGG_\sigma$ if and only if $(\pi (y), n)\in \GGG_\sigma$, and similarly we let $(y, n)\in \bar \SSS_\sigma$ for $(\Lambda, f)$ if and only if $(\pi (y), n)\in \SSS_\sigma$.  We also define the decomposition of an orbit segment $(x,n)$ by the decomposition of the image of the orbit segment for $(\pi(x), n)$.

We note that for a partially hyperbolic attractor the stable manifolds vary H\"older continuously so the map $\pi$ is H\"older continuous.
The next result is similar to Theorem \ref{thm.unique}.

\begin{theorem}\label{t.generalAttractor}
Let $\varphi:\Lambda\to \mathbb{R}$ be a H\"older continuous potential function. 
If there exists some $\sigma\in (0,1)$ such that $P(\bar{\SSS}_\sigma, \varphi)< P(\varphi;  f|_\Lambda)$, then there exists a unique equilibrium state for $(\Lambda, f)$ associated with $\ph$.  
\end{theorem}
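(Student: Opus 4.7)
The plan is to deduce Theorem \ref{t.generalAttractor} from Theorem \ref{t.generalM} applied to $(\Lambda, f)$ equipped with the trivial prefix decomposition $(\bar{\mathcal P}_\sigma, \bar\GGG_\sigma, \bar\SSS_\sigma)$ pulled back from $g$ via $\pi$. Concretely, I must verify (i) $\bar\GGG_\sigma$ has (W)-specification at every small scale, (ii) $\varphi$ has the Bowen property on $\bar\GGG_\sigma$, and (iii) $\Pexp(\varphi) \leq P(\bar\SSS_\sigma,\varphi)$, which combined with the hypothesis $P(\bar\SSS_\sigma,\varphi) < P(\varphi; f|_\Lambda)$ will yield uniqueness. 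Throughout, I will use freely that $\pi$ is Lipschitz (from \eqref{eqn.equivalence}), the stable fibers $M_y$ have uniformly bounded diameter $D$, and $f$-invariant holonomies exist.

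For specification, let $\{(y_j,n_j)\}\subset \bar\GGG_\sigma$ and set $x_j := \pi(y_j)$, so $(x_j,n_j)\in\GGG_\sigma$. Proposition~\ref{specforg} produces a point $x\in X$ and gluing times bounded by $\tau(\epsilon/(2C))$ that $\epsilon/(2C)$-shadow the $(x_j,n_j)$ in $X$. Pick any $\bar y\in\Lambda\cap M_x$; by uniform stable contraction there is $\tau_s=\tau_s(\epsilon/2)$ with $\lambda_s^{\tau_s} D < \epsilon/(2C)$. Increasing every gluing time by $\tau_s$ ensures that by the moment the $f$-orbit of $\bar y$ reaches the window assigned to $(y_j,n_j)$, its stable coordinate within $M_{x_j}$ has contracted to distance less than $\epsilon/(2C)$ from the holonomy image of $y_j$. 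Invoking \eqref{eqn.equivalence} converts the $X$-shadowing and the stable closeness into $d_M$-shadowing at scale $\epsilon$, giving specification for $\bar\GGG_\sigma$.

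For the Bowen property, fix $(\bar y,n)\in\bar\GGG_\sigma$ with $\pi(\bar y)=x_0$, so $(x_0,n)\in\GGG_\sigma$, and let $\bar z\in\Gamma^n_\epsilon(\bar y)$. Using \eqref{eqn.equivalence} and the $f$-invariance of the holonomies (which gives $h_{g^k x_0, g^k \pi(\bar z)}\circ f^k = f^k\circ h_{x_0,\pi(\bar z)}$), we split
\begin{equation*}
d_M(f^k(\bar y),f^k(\bar z)) \leq C\bigl[d_X(g^k x_0, g^k \pi(\bar z)) + \lambda_s^k D\bigr].
\end{equation*}
Since $(x_0,n)\in\GGG_\sigma$ the inverse branches composing $g^{-(n-k)}$ along this orbit contract by at most $\sigma^{n-k}$, so the $X$-term is bounded by $\sigma^{n-k}\cdot C'\epsilon$ (propagated backward from index $n-1$). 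The H\"older hypothesis on $\varphi$ then gives
\begin{equation*}
|S_n\varphi(\bar y) - S_n\varphi(\bar z)| \leq \sum_{k=0}^{n-1} C_\varphi\bigl(\sigma^{n-k} + \lambda_s^k\bigr)^\alpha \cdot C'',
\end{equation*}
and both geometric series converge, producing a bound uniform in $n$. I expect this step to be the main obstacle, because it requires carefully decoupling the $X$-direction and the stable direction via the holonomy estimate while correctly tracking the $\GGG_\sigma$ contraction of the inverse branches along a full window $[0,n-1]$.

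Finally, for the expansivity obstructions, note that if $\bar y\in \mathrm{NE}(\epsilon)$ with witness $\bar z\neq \bar y$, then $\pi(\bar z)\neq \pi(\bar y)$ (otherwise $\bar z$ lies in the stable leaf of $\bar y$, and backward iterates under $f^{-1}$ expand the stable direction, contradicting bi-infinite $\epsilon$-shadowing). Hence Lipschitz-ness of $\pi$ forces $\pi(\bar y)\in X$ to have $\Gamma^+_{C\epsilon}(\pi(\bar y))\neq \{\pi(\bar y)\}$, and Lemma~\ref{lem.expansion} then places $\pi(\bar y)$ in the set $A$ of Section~\ref{ss.obstructions}. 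Setting $\bar A := \pi^{-1}(A)$ we therefore get $\mathrm{NE}(\epsilon)\subset \bar A$ for $\epsilon\leq \epsilon_0/C$. Partitioning $\bar A = \bigcup_k \bar A_k$ by the threshold $K(x_0)\leq k$ and repeating the Katok-formula argument of Lemma~\ref{lem.expansiveobstruction} verbatim on $\Lambda$ (with $f$ in place of $g$ and using that $\bar y\in \bar A_k$, $n>k$ implies $(\bar y,n)\in\bar\SSS_\sigma$) yields $h_\mu(f)+\int\varphi\,d\mu \leq P(\bar\SSS_\sigma,\varphi)$ for every ergodic $\mu$ charging $\mathrm{NE}(\epsilon)$. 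Thus $\Pexp(\varphi)\leq P(\bar\SSS_\sigma,\varphi) < P(\varphi;f|_\Lambda)$, the decomposition $(\emptyset,\bar\GGG_\sigma,\bar\SSS_\sigma)$ meets every hypothesis of Theorem~\ref{t.generalM}, and uniqueness of the equilibrium state for $\varphi$ follows.
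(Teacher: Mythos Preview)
Your proposal is correct and follows essentially the same route as the paper: lift the $(\emptyset,\GGG_\sigma,\SSS_\sigma)$ decomposition through $\pi$, then verify specification via Proposition~\ref{specforg} plus a stable-contraction waiting time $\tau_s$, the Bowen property via the two-term estimate $d_M\lesssim \sigma^{n-k}+\lambda_s^k$ coming from \eqref{eqn.equivalence}, and $\Pexp\le P(\bar\SSS_\sigma,\varphi)$ by observing that non-expansive pairs must differ in projection and then rerunning Lemma~\ref{lem.expansiveobstruction}. Your write-up is in fact more explicit than the paper's in the Bowen and expansivity steps; the specification sketch (``increase every gluing time by $\tau_s$'') is at the same level of detail as the paper's own argument, which likewise defers the fine points to the construction in Proposition~\ref{specforg}.
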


\begin{proof}
We first prove the Bowen property for the orbit segments in $\bar\GGG_\sigma;$  to do this we modify the arguments from Section \ref{Bowen}.
Given any $\bar x, \bar y \in \Lambda$, by the H\"older continuity of $\varphi$ we know there exist constants $C_0>0$ and $\alpha\in (0,1)$ such that 
\begin{equation}\label{eq.BowenAtt}
 |S_n \ph (\bar x) - S_n \ph(\bar y)|\leq \sum\limits_{i=0}^{n-1} C_0 d_M(f^i(\bar x),f^i(\bar y))^\alpha.
\end{equation}

If $(\bar x, n) \in \bar \GGG_\sigma$ and $\bar y  \in \Gamma^n_\eps(\bar x) $ we know  by the contraction properties of the inverse branches of $g$ at orbits in $\GGG_\sigma$ that 
$$
d_X(g^k(\pi(\bar x)),g^k(\pi(\bar y)))\leq C\epsilon \sigma^{n-k}
$$
where $C$ is from \eqref{eqn.equivalence}.
Thus, from the above and \eqref{eqn.equivalence} we have that for every $0 \leq i< n$:
$$
d_M(f^i(\bar x), f^i(\bar y))\leq \epsilon \sigma^{n-i} + \lambda^i\epsilon.
$$
	
Using \eqref{eq.BowenAtt} we have
$$
\begin{array}{llll}
|S_n \ph (\bar x) - S_n \ph(\bar y)|&\leq \sum\limits_{i=0}^{n-1} C_0 d_M(f^i(\bar x),f^i(\bar y))^\alpha\\
& \leq \sum\limits_{i=0}^{n-1} C_0 (\epsilon \sigma^{n-i} + \lambda^i\epsilon)^\alpha \\
& \leq \sum_{i=0}^{n-1} C_0 \epsilon^\alpha(\sigma^{n-i} + \lambda^i)< K,
\end{array}
$$ for some constant $K$ big enough.

We can modify the proof of Lemma \ref{lem.expansiveobstruction} to show that $\Pexp(\ph;  f|_\Lambda)\leq P(\bar{\SSS}_\sigma, \ph) < P(\ph;  f|_\Lambda)$.  This follows since the partially hyperbolic attractor is uniformly contracting in the stable direction, so the non-expansive points are distinguished by the the projection.

Lastly, we see that we can modify Proposition \ref{specforghat} to show that $\bar{\GGG}_\sigma$ has specification for sufficiently small scales.  
Given $\epsilon>0$, take $\tau$ as in Proposition~\ref{specforg}.   By the uniform contraction on the fiber and $\max_{y\in\Lambda}\mathrm{diam}(M_y)<\infty$ we know there exists a $\tau_s$ such that for all $x\in \Lambda$ there exists some $j\leq \tau_s$ such that
$$
f^{-k} (M_x)\subset B_{\epsilon/2}(f^{-k}(x))\cap M_{f^{-k}(x)})$$
for all $k\geq j$.

Let $\bar \tau=\max\{ \tau, \tau_s\}$.  Now arguing as in Proposition \ref{specforg} we can use $\bar \tau$ as the transition time and show that $\bar \GGG_\sigma$ has specification at scale $\epsilon$ for $f$.

We now have a decomposition with specification and the Bowen property, and the pressure of obstructions is less than the pressure of $f$ restricted to $\Lambda$.  From Theorem \ref{t.generalM} we see that there is a unique equilibrium state.
\end{proof}

For applications to the above theorem we note that the results in \cite{FO1} satisfy the hypotheses of Theorem \ref{t.generalAttractor}.

\bibliography{EqStatesPH}{}

\begin{thebibliography}{10}

\bibitem{Alves}
Jos\'e~Ferreira Alves.
\newblock S{RB} measures for non-hyperbolic systems with multidimensional
  expansion.
\newblock {\em Ann. Sci. \'Ecole Norm. Sup. (4)}, 33(1):1--32, 2000.

\bibitem{Bow75}
R.~Bowen.
\newblock Some systems with unique equilibrium states.
\newblock {\em Math. Systems Theory}, 8(3):193--202, 1974/75.

\bibitem{BCS}
J\'er\^ome Buzzi, Sylvain Crovisier, and Omri Sarig.
\newblock {Measures of maximal entropy for surface diffeomorphisms}.
\newblock Preprint, arXiv:11811.02240, 2018.

\bibitem{BFT}
J\'er\^ome Buzzi, Todd Fisher, and Ali Tahzibi.
\newblock {A dichotomy for measures of maximal entropy near time-one maps of
  transitive Anosov flows}.
\newblock Preprint, arXiv:1904.07821, 2019.

\bibitem{CT1}
V.~Climenhaga and D.~J. Thompson.
\newblock Intrinsic ergodicity beyond specification: {$\beta$}-shifts,
  {$S$}-gap shifts, and their factors.
\newblock {\em Israel J. Math.}, 192(2):785--817, 2012.

\bibitem{CFT_BV}
Vaughn Climenhaga, Todd Fisher, and Daniel~J. Thompson.
\newblock Unique equilibrium states for {B}onatti-{V}iana diffeomorphisms.
\newblock {\em Nonlinearity}, 31(6):2532--2570, 2018.

\bibitem{CFT_Mane}
Vaughn Climenhaga, Todd Fisher, and Daniel~J. Thompson.
\newblock Equilibrium states for {M}a\~{n}\'{e} diffeomorphisms.
\newblock {\em Ergodic Theory Dynam. Systems}, 39(9):2433--2455, 2019.

\bibitem{CPZ}
Vaughn Climenhaga, Yakov Pesin, and Agnieszka Zelerowicz.
\newblock Equilibrium states in dynamical systems via geometric measure theory.
\newblock {\em Bull. Amer. Math. Soc. (N.S.)}, 56(4):569--610, 2019.

\bibitem{CT}
Vaughn Climenhaga and Daniel~J. Thompson.
\newblock Unique equilibrium states for flows and homeomorphisms with
  non-uniform structure.
\newblock {\em Adv. Math.}, 303:745--799, 2016.

\bibitem{CrisostomoThazibi}
J.~Crisostomo and A.~Tahzibi.
\newblock Equilibrium states for partially hyperbolic diffeomorphisms with
  hyperbolic linear part.
\newblock {\em Nonlinearity}, 32(2):584--602, 2019.

\bibitem{FO1}
T.~Fisher and K.~Oliveira.
\newblock Equilibrium states for certain partially hyperbolic attractors.
\newblock Preprint, arXiv:1909.05230, 2019.

\bibitem{M88}
L.~Mendoza.
\newblock Ergodic attractors for diffeomorphisms of surfaces.
\newblock {\em J. London Math. Soc. (2)}, 37(2):362--374, 1988.

\bibitem{ObataStandard}
Davi Obata.
\newblock {Uniqueness of the measure of maximal entropy for the standard map}.
\newblock Preprint, arXiv:2003.00236, 2020.

\bibitem{OV}
Krerley Oliveira and Marcelo Viana.
\newblock Existence and uniqueness of maximizing measures for robust classes of
  local diffeomorphisms.
\newblock {\em Discrete Contin. Dyn. Syst.}, 15(1):225--236, 2006.

\bibitem{Pesin}
Y.~Pesin.
\newblock {\em Dimension theory in dynamical systems}.
\newblock Chicago Lectures in Mathematics. University of Chicago Press,
  Chicago, IL, 1997.

\bibitem{yP04}
Y.~Pesin.
\newblock {\em Lectures on partial hyperbolicity and stable ergodicity}.
\newblock Zurich Lectures in Advanced Mathematics. European Mathematical
  Society (EMS), Z\"urich, 2004.

\bibitem{RS}
V.~Ramos and J.~Siqueira.
\newblock On equilibrium states for partially hyperbolic horseshoes: uniqueness
  and statistical properties.
\newblock {\em Bull. Braz. Math. Soc. (N.S.)}, 48(3):347--375, 2017.

\bibitem{RV}
Vanessa Ramos and Marcelo Viana.
\newblock Equilibrium states for hyperbolic potentials.
\newblock {\em Nonlinearity}, 30(2):825--847, 2017.

\bibitem{Sarig13}
Omri~M. Sarig.
\newblock Symbolic dynamics for surface diffeomorphisms with positive entropy.
\newblock {\em J. Amer. Math. Soc.}, 26(2):341--426, 2013.

\bibitem{VV}
Paulo Varandas and Marcelo Viana.
\newblock Existence, uniqueness and stability of equilibrium states for
  non-uniformly expanding maps.
\newblock {\em Ann. Inst. H. Poincar\'e Anal. Non Lin\'eaire}, 27(2):555--593,
  2010.

\bibitem{VO}
Marcelo Viana and Krerley Oliveira.
\newblock {\em Foundations of ergodic theory}, volume 151 of {\em Cambridge
  Studies in Advanced Mathematics}.
\newblock Cambridge University Press, Cambridge, 2016.

\bibitem{Walters78}
Peter Walters.
\newblock Invariant measures and equilibrium states for some mappings which
  expand distances.
\newblock {\em Trans. Amer. Math. Soc.}, 236:121--153, 1978.

\bibitem{Williams74}
R.~F. Williams.
\newblock Expanding attractors.
\newblock {\em Inst. Hautes \'{E}tudes Sci. Publ. Math.}, (43):169--203, 1974.

\end{thebibliography}
\bibliographystyle{plain}
\end{document}